\newtheorem{theorem}{Theorem}[section]
\newtheorem{lemma}[theorem]{Lemma}
\newtheorem{corollary}[theorem]{Corollary}
\theoremstyle{definition}
\newtheorem{remark}[theorem]{Remark}
\def\Aut{{\rm Aut}}
\newcommand{\ZZ}{\mathbb{Z}}
\begin{document}
\title[Vertex-primitive digraphs]{Vertex-primitive digraphs having vertices with almost equal neighbourhoods}

\author[P. Spiga, G. Verret]{Pablo Spiga, Gabriel Verret}

\address{Pablo Spiga,  Dipartimento di Matematica Pura e Applicata, 
\newline\indent University of Milano-Bicocca, Milano, 20125 Via Cozzi 55, Italy.} 
\email{pablo.spiga@unimib.it}

\address{Gabriel Verret,  School of Mathematics and Statistics, 
\newline\indent University of Western Australia, 35 Stirling Highway, Crawley, WA 6009, Australia.
\newline\indent FAMNIT, University of Primorska, Glagolja\v{s}ka 8, SI-6000 Koper, Slovenia.}
\email{gabriel.verret@uwa.edu.au}

\thanks{The last author is supported by UWA as part of the Australian Research Council grant DE130101001.}

\subjclass[2010]{Primary 05E18; Secondary 20B25,20Mxx}
\keywords{vertex-primitive digraphs, synchronising groups}

\begin{abstract}
We consider vertex-primitive digraphs having two vertices with almost equal neighbourhoods (that is, the set of vertices that are neighbours of one but not the other is small). We prove a structural result about such digraphs and then apply it to answer a question of Ara\'{u}jo and Cameron about synchronising groups.
\end{abstract}

\maketitle

\section{Introduction}
All sets, digraphs and groups considered in this paper are finite. For basic definitions, see Section~\ref{sec:prelim}. Let $\Gamma$ be a digraph on a set $\Omega$  and suppose that $\Gamma$ is \emph{vertex-primitive}, that is, its automorphism group acts primitively on $\Omega$. It is well known and easy to see that, in this case, $\Gamma$ cannot have two distinct vertices with equal neighbourhoods, unless $\Gamma=\emptyset$ or $\Gamma=\Omega\times\Omega$ (see Lemma~\ref{newLemma} for example). 

We consider the situation when $\Gamma$ has two vertices with ``almost'' equal neighbourhoods. Since $\Gamma$ is vertex-primitive, it is regular, of valency $d$, say. Let $\Gamma_i$ be the graph on $\Omega$ with two vertices being adjacent if the intersection of their neighbourhoods in $\Gamma$ has size $d-i$. Our main result is the following.

\begin{theorem}\label{theorem:main}
Let $\Gamma$ be a vertex-primitive digraph on a set $\Omega$ with $\Gamma\neq\emptyset$ and $\Gamma\neq\Omega\times\Omega$. Let $n$ be the order of $\Gamma$ and $d$ its valency. If $\kappa$ is the smallest positive $i$ such that $\Gamma_i\neq\emptyset$, then either 
\begin{enumerate}
\item $\Gamma_0\cup\Gamma_\kappa=\Omega\times\Omega$ and  $(n-1)(d-\kappa)=d(d-1)$, or \label{partfirst}
\item there exists $i\in\{\kappa,\ldots,d-1\}$ such that $\Gamma_i$ has valency at least $1$ and at most $\kappa^2+\kappa$. \label{parttwo}
\end{enumerate}
\end{theorem}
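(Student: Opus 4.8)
My plan is to recast the hypothesis metrically. For vertices $u,v$ write $\rho(u,v)=d-|N(u)\cap N(v)|=\tfrac12|N(u)\triangle N(v)|$, where $N(\cdot)$ denotes the out-neighbourhood. Since $\Gamma$ is vertex-primitive it is vertex-transitive, so all out- and in-degrees equal $d$, and by Lemma~\ref{newLemma} distinct vertices have distinct neighbourhoods. Thus $\rho$ is an $\Aut(\Gamma)$-invariant metric on $\Omega$ whose smallest positive value is exactly $\kappa$, and $\Gamma_i$ is precisely the graph of pairs at $\rho$-distance $i$. Each $\Gamma_i$ is $\Aut(\Gamma)$-invariant, hence a union of orbital graphs; since $\Aut(\Gamma)$ is primitive, every nonempty such graph (its edges joining distinct vertices) is connected and spanning. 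In particular $\Gamma_\kappa$ is connected. I also record two global identities from double counting with all in-degrees equal to $d$: writing $k_i$ for the (constant) valency of $\Gamma_i$, one has $\sum_i k_i=n-1$ and $\sum_i i\,k_i=d(n-d)$, the second coming from $\sum_{x}\binom{d}{2}=\sum_{\{u,v\}}|N(u)\cap N(v)|$.

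The dichotomy then splits on the diameter of $\rho$. If $\kappa$ is the only positive value taken by $\rho$, equivalently $\Gamma_0\cup\Gamma_\kappa=\Omega\times\Omega$, then every two distinct vertices share exactly $d-\kappa$ out-neighbours; feeding $|N(u)\cap N(v)|=d-\kappa$ into the displayed double count gives $\binom{n}{2}(d-\kappa)=n\binom{d}{2}$, that is $(n-1)(d-\kappa)=d(d-1)$, which is conclusion~(\ref{partfirst}) (concretely, the neighbourhoods then form a symmetric $2$-design). Otherwise $\rho$ takes some value exceeding $\kappa$; let $m$ be the smallest such value. As $\Gamma_\kappa$ is connected but, in this case, not complete, it has a pair of vertices at graph-distance $2$, and the triangle inequality for $\rho$ forces their distance into $(\kappa,2\kappa]$; by minimality $\kappa<m\le 2\kappa$.

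Before the main estimate I dispose of the boundary case $m=d$. Here the only positive values of $\rho$ are $\kappa$ and $d$, so the two global identities become $k_\kappa+k_d=n-1$ and $\kappa k_\kappa+d k_d=d(n-d)$; eliminating $k_d$ yields $k_\kappa=d(d-1)/(d-\kappa)$. Since $m=d\le 2\kappa$ gives $d-\kappa\le\kappa$, a short routine inequality shows $d(d-1)\le\kappa(\kappa+1)(d-\kappa)$, i.e. $k_\kappa\le\kappa^2+\kappa$, so conclusion~(\ref{parttwo}) holds with $i=\kappa$ (note equality at $\kappa=d-1$).

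It remains to treat $m\le d-1$, where I aim to prove $1\le k_m\le\kappa^2+\kappa$, which is exactly conclusion~(\ref{parttwo}) with $i=m$. The plan is to work at a boundary edge: fix $v\sim w$ in $\Gamma_\kappa$ and analyse how out-neighbourhoods change across it, recording the two $\kappa$-sets $A=N(v)\setminus N(w)$ and $B=N(w)\setminus N(v)$. Any vertex $u$ with $\rho(v,u)=m$ differs from $N(v)$ only within a region controlled by these difference sets, and the requirement that no positive distance below $m$ other than $\kappa$ be realised forces enough cancellation that each such $u$ is encoded by a pair drawn from sets of size about $\kappa$, giving at most $\kappa^2+\kappa$ possibilities. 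The main obstacle, and the real content of the argument, is that a priori a vertex at $\rho$-distance $m$ from $v$ need not lie at $\Gamma_\kappa$-graph-distance $2$ from $v$, since $\rho$ only bounds graph-distance from below; hence the local encoding must be shown to account for \emph{all} of $S_m(v)=\{u:\rho(u,v)=m\}$ and not merely its graph-local part. Establishing this locality of the smallest jump, and thereby upgrading the local count to the global bound $k_m\le\kappa^2+\kappa$, is the step I expect to be hardest and where primitivity must be used most delicately.
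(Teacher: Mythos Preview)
Your treatment of conclusion~(\ref{partfirst}) and of the boundary case $m=d$ is correct and clean; the metric reformulation and the two global identities $\sum_i k_i=n-1$, $\sum_i i k_i=d(n-d)$ are exactly the right tools there, and the inequality $d(d-1)\le\kappa(\kappa+1)(d-\kappa)$ for $\kappa<d\le 2\kappa$ does hold. The genuine gap is the main case $m\le d-1$: you propose to bound $k_m$ by a local encoding at a fixed $\Gamma_\kappa$-edge $vw$, but you do not carry this out, and you yourself identify the obstacle --- a vertex $u$ with $\rho(v,u)=m$ need not be at $\Gamma_\kappa$-graph-distance $2$ from $v$, so there is no reason its neighbourhood should interact with the specific difference sets $A=N(v)\setminus N(w)$ and $B=N(w)\setminus N(v)$ at all. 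Nothing in your sketch explains what the ``pair drawn from sets of size about $\kappa$'' actually is, or how primitivity forces every $u\in S_m(v)$ to be so encoded. As written, this is a hope rather than an argument, and I do not see how to complete it along these lines; indeed it is not even clear that $k_m$ itself (as opposed to the minimum $k_i$) is always at most $\kappa^2+\kappa$.

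The paper avoids this locality problem entirely by working globally. It introduces a different auxiliary parameter: let $\ell$ be the least index $\ge\kappa$ followed by $\kappa$ consecutive empty layers $\Gamma_{\ell+1}=\cdots=\Gamma_{\ell+\kappa}=\emptyset$. The relation $\Gamma_i\circ\Gamma_\kappa\subseteq\bigcup_{|j-i|\le\kappa}\Gamma_j$ then forces $\Gamma_0\cup\cdots\cup\Gamma_\ell=\Omega\times\Omega$, whence $\sum_{i\le\ell}d_i=n$. Combining this with the elementwise count $\sum_i d_i(d-i)=d^2$ and the bound $n\ge d+\ell$ yields the single inequality $\ell(\ell-1)\ge\sum_{i=1}^{\ell-1}d_i(\ell-i)$. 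Because the nonzero indices below $\ell$ have no gap of length $>\kappa$, the right-hand side has many terms, and a short calculus optimisation gives $d^*:=\min\{d_i:d_i\neq 0,\,i<\ell\}\le\kappa^2+\kappa$. So the paper never tries to control the specific layer $\Gamma_m$; it bounds the \emph{smallest} nonzero valency among all intermediate layers, which is exactly what the theorem asks for. If you want to salvage your approach, the natural fix is to replace the local encoding by this global counting argument once you are past the $m=d$ boundary case.
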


Theorem~\ref{theorem:main} is most powerful when $\kappa$ is small. To illustrate this, we completely determine the digraphs which occur when $\kappa=1$ (see Corollary~\ref{usefulCor}). We then apply this result to answer a question of Ara\'{u}jo and Cameron~\cite[Problem 2(a)]{ArCam} (see Theorem~\ref{theotheo}) concerning synchronising groups. Finally, in Section~\ref{Sec:othercase}, we say a few words about the case $\Gamma_0\cup\Gamma_\kappa=\Omega\times\Omega$.

\section{Preliminaries}\label{sec:prelim}

\subsection{Digraphs}
Let $\Omega$ be a finite set. A \emph{digraph} $\Gamma$ on $\Omega$ is a binary relation on $\Omega$, in other words, a subset of $\Omega\times\Omega$. The elements of $\Omega$ are called the \emph{vertices} of $\Gamma$ while the cardinality of $\Omega$ is the \emph{order} of $\Gamma$. The digraph $\Gamma^{-1}$ is $\{(\alpha,\beta)\in \Omega\times \Omega: (\beta,\alpha)\in\Gamma\}$. Given two digraphs $\Gamma$ and $\Lambda$ on $\Omega$, we define the digraph
 $$\Gamma\circ\Lambda:=\{(\alpha,\beta)\in \Omega\times \Omega: \textrm{there exists }\gamma\in \Omega \textrm{ with }(\alpha,\gamma)\in \Gamma,(\gamma,\beta)\in \Lambda\}.$$
 
Let $v$ be a vertex of $\Gamma$. The \emph{neighbourhood} of $v$ is the set $\{u\in\Omega:(v,u)\in\Gamma\}$ and is denoted $\Gamma(v)$. Its cardinality is the \emph{valency} of $v$. If every vertex of $\Gamma$ has the same valency, say $d$, then we say that $\Gamma$ is \emph{regular of valency $d$}.

If $\Gamma$ is a symmetric binary relation, then it is sometimes called a \emph{graph}. If $\Psi$ is a subset of $\Omega$, then the \emph{subgraph of $\Gamma$ induced by $\Psi$} is $\Gamma\cap(\Psi\times\Psi)$ viewed as a graph on $\Psi$. We denote by $\Omega^*$ the set $\{(v,v):v\in\Omega\}$. The graph $(\Omega\times\Omega)\setminus\Omega^*$ is called the \emph{complete graph} on $\Omega$.

\subsection{Groups}
The \emph{automorphism group} of $\Gamma$, denoted $\Aut(\Gamma)$, is the group of permutations of $\Omega$ that preserve $\Gamma$. A permutation group $G$ on $\Omega$ is \emph{transitive} if for every $x,y\in\Omega$ there exists $g\in G$ with $x^g=y$, that is, mapping $x$ to $y$. A permutation group on $\Omega$ is \emph{primitive} if it preserves no nontrivial partition of $\Omega$. (Observe that a primitive group $G$ on $\Omega$ is transitive unless $G=1$ and $|\Omega|=2$.) We say that $\Gamma$ is \emph{vertex-transitive}  if $\Aut(\Gamma)$ is transitive.



\subsection{A few basic results}
\begin{lemma}\label{super basic}
Let $\Gamma$ be a vertex-transitive digraph on $\Omega$. If $\Gamma_0=\Omega\times\Omega$, then $\Gamma=\emptyset$ or $\Gamma=\Omega\times\Omega$.
\end{lemma}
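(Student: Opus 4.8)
The plan is to show that the hypothesis $\Gamma_0=\Omega\times\Omega$ forces every vertex to have the same neighbourhood, and then to exploit vertex-transitivity to pin down that common neighbourhood.

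First I would make the set-up precise. Since $\Gamma$ is vertex-transitive, $\Aut(\Gamma)$ acts transitively on $\Omega$, so all vertices share a common valency $d$, and $\Gamma_0$ is well-defined. By definition, two vertices $u,v$ are adjacent in $\Gamma_0$ exactly when $|\Gamma(u)\cap\Gamma(v)|=d$; as $\Gamma(u)$ and $\Gamma(v)$ both have size $d$, this is equivalent to $\Gamma(u)=\Gamma(v)$. The assumption $\Gamma_0=\Omega\times\Omega$ therefore says that $\Gamma(u)=\Gamma(v)$ for all $u,v\in\Omega$. Writing $N$ for this common neighbourhood, we obtain $\Gamma=\Omega\times N$.

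Next I would identify $\Aut(\Gamma)$. A permutation $g$ of $\Omega$ preserves $\Gamma=\Omega\times N$ if and only if it preserves the set $N$: indeed $(\alpha,\beta)\in\Gamma$ holds precisely when $\beta\in N$, so invariance of $\Gamma$ reduces to the single condition $N^g=N$. Hence $\Aut(\Gamma)$ is exactly the setwise stabiliser of $N$ in $\Sym(\Omega)$. Finally I would invoke transitivity: the setwise stabiliser of $N$ fixes both $N$ and $\Omega\setminus N$, so it cannot map a vertex inside $N$ to a vertex outside $N$. It is therefore transitive on $\Omega$ only when one of $N$ or $\Omega\setminus N$ is empty, i.e.\ $N=\emptyset$ or $N=\Omega$. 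In the first case $\Gamma=\emptyset$ and in the second $\Gamma=\Omega\times\Omega$, as required.

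The argument is short and I do not anticipate a genuine obstacle. The only point needing a little care is the reduction from $\Gamma_0=\Omega\times\Omega$ to the statement that all neighbourhoods coincide; this hinges on regularity (guaranteed by vertex-transitivity), so that equality of the size-$d$ sets $\Gamma(u)$ and $\Gamma(v)$ follows from their intersection having size $d$.
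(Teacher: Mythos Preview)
Your proof is correct and follows essentially the same approach as the paper: both first observe that $\Gamma_0=\Omega\times\Omega$ forces all neighbourhoods to coincide with a common set $N$, and then use vertex-transitivity to conclude $N\in\{\emptyset,\Omega\}$. The only cosmetic difference is that you explicitly identify $\Aut(\Gamma)$ as the setwise stabiliser of $N$, whereas the paper argues directly that any $\beta\in N$ can be moved by transitivity to an arbitrary vertex, forcing $N=\Omega$.
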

\begin{proof}
Suppose that $\Gamma\neq\emptyset$ and thus there exists $(\alpha,\beta)\in\Gamma$. As $\Gamma_0=\Omega\times\Omega$, all vertices of $\Gamma$ have the same neighbourhood and thus $\beta\in\Gamma(\omega)$ for every $\omega\in\Omega$ but then vertex-transitivity implies that $\Gamma=\Omega\times\Omega$.
\end{proof}

\begin{lemma}\label{super basic2}
Let $\Gamma$ be a vertex-primitive graph on $\Omega$. If $\Gamma$ is a transitive relation on $\Omega$ and $\Gamma\not\subseteq\Omega^*$, then $\Gamma=\Omega\times\Omega$.
\end{lemma}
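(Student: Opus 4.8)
The plan is to exploit the fact that a binary relation which is simultaneously symmetric and transitive is a \emph{partial} equivalence relation, and then let vertex-primitivity upgrade this to one of the two extreme cases. First I would record the structural consequence of the two hypotheses on $\Gamma$. Set $S=\{v\in\Omega:(v,v)\in\Gamma\}$. If $v$ lies in some edge of $\Gamma$, say $(v,u)\in\Gamma$, then symmetry gives $(u,v)\in\Gamma$ and transitivity then gives $(v,v)\in\Gamma$, so $v\in S$; thus every vertex incident to an edge lies in $S$, and in particular $\Gamma\subseteq S\times S$. Restricted to $S$ the relation $\Gamma$ is reflexive by the definition of $S$, and it is symmetric and transitive by hypothesis, so $\Gamma$ is an equivalence relation on $S$, while $\Omega\setminus S$ carries no edges at all.

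Next I would bring in the automorphisms. Since $\Aut(\Gamma)$ preserves $\Gamma$, it preserves the set $S$ of vertices carrying a loop and hence permutes the $\Gamma$-classes of $S$ among themselves while fixing $\Omega\setminus S$ setwise. Consequently the partition $\mathcal{P}$ of $\Omega$ whose parts are the $\Gamma$-equivalence classes contained in $S$ together with the singletons $\{w\}$ for $w\in\Omega\setminus S$ is $\Aut(\Gamma)$-invariant. Vertex-primitivity now forces $\mathcal{P}$ to be one of the two trivial partitions of $\Omega$.

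Finally I would dispose of the two cases. If $\mathcal{P}$ is the partition into singletons, then every $\Gamma$-class on $S$ is a singleton, so $\Gamma\subseteq\Omega^*$, contradicting the hypothesis $\Gamma\not\subseteq\Omega^*$. Hence $\mathcal{P}$ has a single part equal to $\Omega$; since $\Gamma\not\subseteq\Omega^*$ forces $|\Omega|\geq 2$, this single part cannot be a singleton coming from $\Omega\setminus S$, so it is a $\Gamma$-class. Therefore $S=\Omega$ and all of $\Omega$ forms a single $\Gamma$-class, that is $\Gamma=\Omega\times\Omega$, as required.

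The argument is short, and the only delicate point is the degenerate primitive group, namely the trivial group on two points, where primitivity need not give transitivity. I would either check directly that on $\Omega=\{a,b\}$ a symmetric transitive relation containing $(a,b)$ must also contain $(b,a)$, $(a,a)$ and $(b,b)$ and so equals $\Omega\times\Omega$, or simply observe that the partition argument above never invoked transitivity of $\Aut(\Gamma)$ and so already covers this case. The main conceptual step—rather than a genuine obstacle—is recognising that ``symmetric $+$ transitive'' already encodes an equivalence relation on the support $S$, after which primitivity leaves no room for anything strictly between the diagonal and the complete relation.
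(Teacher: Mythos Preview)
Your argument is correct: recognising that a symmetric transitive relation is an equivalence relation on its support $S$, and then using primitivity on the resulting $\Aut(\Gamma)$-invariant partition, is exactly the natural route, and you have handled the boundary case $|\Omega|=2$ carefully. The paper itself states this lemma without proof (treating it as folklore), so there is no argument to compare against; your write-up would serve perfectly well as the omitted justification.
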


\begin{lemma}\label{newLemma}
Let $\Gamma$ be a vertex-primitive digraph on $\Omega$. If $\emptyset\neq\Gamma\neq\Omega\times\Omega$, then $\Gamma_0=\Omega^*$.
\end{lemma}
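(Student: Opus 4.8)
The plan is to repackage the hypothesis ``two distinct vertices have equal neighbourhoods'' as an $\Aut(\Gamma)$-invariant equivalence relation, and then let primitivity force a dichotomy. First I would record the trivial half of the statement: for every $v\in\Omega$ we have $|\Gamma(v)\cap\Gamma(v)|=|\Gamma(v)|=d$, so $\Omega^*\subseteq\Gamma_0$ automatically. The real content of the lemma is the reverse inclusion, i.e. that no two \emph{distinct} vertices can share a neighbourhood.

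Next I would observe that, since $\Gamma$ is regular of valency $d$ (a consequence of vertex-transitivity), the defining condition $|\Gamma(u)\cap\Gamma(v)|=d$ for $(u,v)\in\Gamma_0$ is equivalent to $\Gamma(u)=\Gamma(v)$: two $d$-element sets whose intersection has size $d$ must coincide. Read this way, $\Gamma_0$ is visibly reflexive, symmetric and transitive, hence an equivalence relation on $\Omega$. I would then check it is preserved by $\Aut(\Gamma)$: for $g\in\Aut(\Gamma)$ one has $\Gamma(u^g)=\Gamma(u)^g$, so $\Gamma(u)=\Gamma(v)$ implies $\Gamma(u^g)=\Gamma(u)^g=\Gamma(v)^g=\Gamma(v^g)$, i.e. $(u^g,v^g)\in\Gamma_0$. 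Thus the partition of $\Omega$ into $\Gamma_0$-classes is $\Aut(\Gamma)$-invariant.

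Because $\Aut(\Gamma)$ acts primitively, this partition must be trivial, so either every class is a singleton or there is a single class equal to $\Omega$. In the first case distinct vertices have distinct neighbourhoods, which is exactly $\Gamma_0=\Omega^*$, and the proof is complete. In the second case all vertices share one common neighbourhood, i.e. $\Gamma_0=\Omega\times\Omega$; applying Lemma~\ref{super basic} then yields $\Gamma=\emptyset$ or $\Gamma=\Omega\times\Omega$, contradicting the hypothesis. The only delicate point I anticipate is the reliance on vertex-transitivity, which I use both to identify $\Gamma_0$ with the ``equal-neighbourhood'' relation (via regularity) and to invoke Lemma~\ref{super basic}. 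By the remark on primitive groups, the sole primitive-but-not-transitive case is the trivial group on two points, which I would either exclude or settle directly by hand before running the main argument.
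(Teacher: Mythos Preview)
Your argument is correct and is essentially the same as the paper's: you show $\Gamma_0$ is an $\Aut(\Gamma)$-invariant equivalence relation, use primitivity to force $\Gamma_0\in\{\Omega^*,\Omega\times\Omega\}$, and eliminate the latter via Lemma~\ref{super basic}. The only cosmetic difference is that the paper packages the ``invariant transitive relation $+$ primitivity $\Rightarrow$ trivial or full'' step into the separately stated Lemma~\ref{super basic2}, whereas you prove it inline.
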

\begin{proof}
Clearly, $\Omega^*\subseteq \Gamma_0$. If $\Gamma_0\not\subseteq\Omega^*$, then Lemma~\ref{super basic2} implies that $\Gamma_0=\Omega\times\Omega$ but this contradicts Lemma~\ref{super basic}.
\end{proof}

\section{Proof of Theorem~\ref{theorem:main}}

Since $\emptyset\neq\Gamma\neq\Omega\times\Omega$, $n\geq 2$ and Lemma~\ref{super basic} implies that $\Gamma_0\neq \Omega\times\Omega$ and thus $\kappa$ is well-defined. Since $\Gamma$ is vertex-primitive, it is regular, of valency $d$, say. By minimality of $\kappa$, $\Gamma_i=\emptyset$ for $i\in\{1,\ldots,\kappa-1\}$ but $\Gamma_\kappa\neq\emptyset$. Note that, for every integer $i$,  $\Aut(\Gamma)\leq\Aut(\Gamma_i)$ hence $\Gamma_i$ is also vertex-primitive and regular, of valency $d_i$, say. By Lemma~\ref{newLemma}, we have $\Gamma_0=\Omega^*$ and thus 
\begin{equation}\label{eq1}
d_0=1.
\end{equation}
Moreover, it is easy to see that 
\begin{equation}\label{EqEq}
\bigcup_{i\in\{0,\ldots,d-1\}}\Gamma_i=\Gamma\circ\Gamma^{-1}.
\end{equation}
It is also easy to check that, for every integer $i$, we have
\begin{equation}\label{Gammaj1}
\Gamma_i\circ \Gamma_\kappa\subseteq \Gamma_{i-\kappa}\cup\Gamma_{i-\kappa+1}\cup\cdots\cup\Gamma_{i+\kappa}.
\end{equation}
Let 
\begin{equation}\label{Defell}
\ell:=\min\{i\geq\kappa : \Gamma_{i+1}=\Gamma_{i+2}=\cdots=\Gamma_{i+\kappa}=\emptyset\}.
\end{equation}
By definition, we have $\ell\geq \kappa$. Recall that $\Gamma_i=\emptyset$ for every $i\geq d+1$ hence
\begin{equation}\label{ellbound}
\kappa\leq\ell\leq d.
\end{equation}
Let $\overline{\Gamma_\kappa}$ be the transitive closure of the relation $\Gamma_\kappa$. (That is, the minimal transitive relation containing $\Gamma_\kappa$.) By Lemma~\ref{super basic2}, we have $\overline{\Gamma_\kappa}=\Omega\times\Omega$. 

Let 
$$\Lambda:=\Gamma_0\cup\Gamma_1\cup \cdots\cup \Gamma_{\ell}.$$
Note that
\begin{eqnarray*}
\Lambda\circ \Gamma_\kappa&=&(\Gamma_0\circ \Gamma_\kappa)\cup(\Gamma_1\circ \Gamma_\kappa)\cup \cdots \cup (\Gamma_{\ell}\circ \Gamma_\kappa)\\
&{(\ref{Gammaj1})}\atop{\subseteq}& \Gamma_{-\kappa}\cup\Gamma_{-\kappa+1}\cup \cdots\cup \Gamma_{\kappa+\ell}\\
&{(\ref{Defell})}\atop{=}& \Gamma_0\cup\Gamma_1\cup \cdots\cup \Gamma_{\ell}=\Lambda.
\end{eqnarray*}
As $\Gamma_\kappa\subseteq\Lambda$, it follows by induction that $\overline{\Gamma_\kappa}\subseteq \Lambda$ and thus 
$$\Lambda=\Omega\times\Omega.$$
This implies that
\begin{equation}\label{EqEq2}
n=\sum_{i=0}^{\ell} d_i=1+d_1+\cdots +d_{\ell}.
\end{equation}
We now consider two cases, according to whether $\ell=\kappa$ or $\ell\geq\kappa+1$.

\subsection{$\boldsymbol{\ell=\kappa}$}\label{papapa}

If $\ell=\kappa$, then the minimality of $\kappa$ implies $\Lambda=\Gamma_0\cup\Gamma_\kappa$ and hence $\Gamma_0\cup\Gamma_\kappa=\Omega\times\Omega$. Let $\mathcal{B}:=\{\Gamma(\alpha)\mid \alpha\in \Omega\}$ and let 
$$\mathcal{S}:=\{(\alpha,b,b')\mid \alpha\in \Omega,b,b'\in\mathcal{B},\alpha\in b\cap b',b\neq b'\}.$$ 
Now, 
$$|\mathcal{S}|=\sum_{\alpha\in \Omega}|\{(b,b')\mid \alpha\in b\cap b',b\neq b'\}|=\sum_{\alpha\in \Omega}d(d-1)=nd(d-1).$$ 
On the other hand, observe that $|b\cap b'|=d-\kappa$ for every $b,b'\in\mathcal{B}$ with $b\neq b'$ and thus
$$|\mathcal{S}|=\sum_{\substack{b,b'\in\mathcal{B}\\b\neq b'}}|b\cap b'|=\sum_{\substack{b,b'\in\mathcal{B}\\b\neq b'}}(d-\kappa)=n(n-1)(d-\kappa).$$
 Therefore $(n-1)(d-\kappa)=d(d-1)$ and the theorem follows.

\subsection{$\boldsymbol{\ell\geq\kappa+1}$}
We assume that $\ell\geq\kappa+1$. By minimality of $\ell$, we have $\Gamma_{\ell}\neq\emptyset$ and thus  there exist two vertices of $\Gamma$ whose neighbourhoods intersect in $d-\ell$ vertices hence, considering the union of their neighbourhoods, we obtain 
\begin{equation}\label{EqEq3}
n\geq d+\ell.
\end{equation}
Let $\alpha\in\Omega$ and let
$$\mathcal{S}(\alpha):=\{(\beta,\gamma)\in\Omega\times \Omega: \beta\in \Gamma(\alpha)\cap\Gamma(\gamma)\}.$$
Clearly, 
\begin{equation}\label{eq:basic}
|\mathcal{S}(\alpha)|=\sum_{\beta\in\Gamma(\alpha)}|\{\gamma\in \Omega: \gamma\in \Gamma^{-1}(\beta)\}|=\sum_{\beta\in\Gamma(\alpha)}d=d^2.
\end{equation}
On the other hand,
\begin{eqnarray*}
|\mathcal{S}(\alpha)|&=&\sum_{\gamma\in (\Gamma\circ \Gamma^{-1})(\alpha)}|\Gamma(\alpha)\cap \Gamma(\gamma)|\\
&{(\ref{EqEq})}\atop{=}&\sum_{i=0}^{d-1}\sum_{\gamma\in \Gamma_i(\alpha)}|\Gamma(\alpha)\cap\Gamma(\gamma)|=\sum_{i=0}^{d-1}\sum_{\gamma\in \Gamma_i(\alpha)}(d-i)=\sum_{i=0}^{d}d_i(d-i)\\
&{(\ref{ellbound})}\atop{\geq}&\sum_{i=0}^{\ell}d_i(d-i)\\
&{(\ref{eq1})}\atop{=}&d+\sum_{i=1}^{\ell}d_i(d-i)\\
&{(\ref{EqEq2})}\atop{=}&d+\left(n-1-\sum_{i=1}^{\ell-1}d_i\right)(d-\ell)+\sum_{i=1}^{\ell-1}d_i(d-i)\\
&=&d+(n-1)(d-\ell)+\sum_{i=1}^{\ell-1}d_i(\ell-i)\\
&{(\ref{EqEq3})}\atop{\geq}& d^2-\ell(\ell-1)+\sum_{i=1}^{\ell-1}d_i(\ell-i).\\
\end{eqnarray*}
Combining this with (\ref{eq:basic}), we have
\begin{equation*}
\ell(\ell-1)\geq\sum_{i=1}^{\ell-1}d_i(\ell-i).
\end{equation*}
Let $\mathcal{I}:=\{i\in\{1,\ldots,\ell-1\}:d_i\neq 0\}$ and $d^*:=\min\{d_i:i\in\mathcal{I} \}$. We have 
\begin{equation}\label{eq:mix}
\ell(\ell-1)\geq d^*\sum_{i\in\mathcal{I}}(\ell-i).
\end{equation}
Since $\ell\geq\kappa+1$, $\kappa$ is the minimum element of $\mathcal{I}$. Note also that $(\ell-i)$ is decreasing with respect to $i$ and, by definition of $\ell$, any two elements of $I$ are at most $\kappa$ apart. Let 
$$\sigma:=\left\lfloor\frac{\ell-1}{\kappa}\right\rfloor.$$
Then
\begin{eqnarray*}
\sum_{i\in\mathcal{I}}(\ell-i)&\geq& (\ell-\kappa)+(\ell-2\kappa)+\cdots+(\ell-\sigma\kappa)\\
&=& \sigma\ell -\frac{\kappa\sigma(\sigma+1)}{2}.
\end{eqnarray*}
Combining this with (\ref{eq:mix}), we find
\begin{equation}\label{yoyoyo}
\ell(\ell-1)\geq d^*\left(\sigma\ell -\frac{\kappa\sigma(\sigma+1)}{2}\right).
\end{equation}
Write $\ell:=\sigma\kappa+r$, with $r\in \{1,\ldots,\kappa\}$. Now, \eqref{yoyoyo} gives
$$\frac{2(\sigma\kappa+r)(\sigma\kappa+r-1)}{\sigma(\sigma\kappa+2r-\kappa)}\geq d^*.$$
Calculating the derivative of the left-hand-side with respect to $\sigma$, one finds
$$-\frac{2\left(\sigma^2\kappa^2(\kappa-1)+ r(r-1)((2\sigma-1)\kappa+2r)   \right)}{(\sigma(\sigma\kappa+2r-\kappa))^2},$$
which is clearly nonpositive since $r,\kappa,\sigma\geq 1$. It follows that the maximum of the left-hand-side of \eqref{yoyoyo} is attained when $\sigma=1$, hence
\begin{equation}\label{yoyoyoyo}
\frac{(\kappa+r)(\kappa+r-1)}{r}\geq d^*.
\end{equation}
If $r=\kappa$, then the left-hand-side of \eqref{yoyoyoyo} is $4\kappa-2$ and an easy computation shows that $4\kappa-2\leq \kappa^2+\kappa$. If $r\leq\kappa-1$, then another easy computation yields that the left-hand-side of \eqref{yoyoyoyo} is a decreasing function of $r$, hence the minimum is attained when $r=1$ and $\kappa^2+\kappa\geq d^*$.
%
%
This completes the proof.\hfill \qed

\begin{remark}
The upper bound $\kappa^2+\kappa$ in Theorem~\ref{theorem:main}~(\ref{parttwo}) is actually tight (see some of the examples in Section~\ref{sec1}). However, the proof of Theorem~\ref{theorem:main} reveals that when more information about $\Gamma$ is available, this upper bound can be drastically improved. For instance, following the argument in the last part of the proof, one finds that if $\sigma\geq 2$,  then $d^*\leq (4\kappa^2+2\kappa)/(\kappa+2)\leq 4\kappa-2$ and hence there exists $i\in\{\kappa,\ldots,d-1\}$ such that $\Gamma_i$ has nonzero valency bounded by a linear function of $\kappa$.
\end{remark}

\section{The case $\kappa=1$ and an application to synchronising groups}\label{sec1}
We now completely determine the digraphs that arise when $\kappa=1$ in Theorem~\ref{theorem:main}. Let $p$ be a prime, let $d\in \ZZ$ with $0\leq d\leq p$ and let $x\in\ZZ_p$. We define $\Delta_{p,x,d}$ to be the Cayley digraph on $\ZZ_p$ with connection set $\{x+1,x+2,\ldots,x+d\}$. (That is, $(u,v)\in\Delta_{p,x,d}$ if and only if $v-u\in\{x+1,x+2,\ldots,x+d\}$.) We will need the following easy lemma.

\begin{lemma}\label{lemma:d1}
Let $\Gamma$ be a vertex-primitive digraph on $\Omega$ of valency $1$. Then $\Gamma=\Omega^*$ or $\Gamma\cong\Delta_{p,0,1}$ for some prime $p$.
\end{lemma}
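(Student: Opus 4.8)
The plan is to regard a valency-$1$ digraph as the graph of a function and then use primitivity to pin down that function. Since every vertex has a unique out-neighbour, there is a map $f\colon\Omega\to\Omega$ with $\Gamma(v)=\{f(v)\}$, so that $(u,v)\in\Gamma$ exactly when $v=f(u)$. A permutation $g$ of $\Omega$ lies in $\Aut(\Gamma)$ if and only if it commutes with $f$; in particular $f$ itself is an automorphism of $\Gamma$. By primitivity, $\Aut(\Gamma)$ is transitive (we dispose of the degenerate case $|\Omega|=2$ separately). Transitivity makes the in-degree $|\Gamma^{-1}(v)|$ constant over $\Omega$, and since these in-degrees sum to the number of arcs, namely $n$, each is equal to $1$; hence $f$ is a bijection, that is, a permutation of $\Omega$.

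First I would dispose of the case where $f$ has a fixed point: the fixed-point set of $f$ is $\Aut(\Gamma)$-invariant, so by transitivity it is empty or all of $\Omega$, and in the latter case $\Gamma=\Omega^*$. So assume from now on that $f$ is fixed-point-free. The key observation is that the partition of $\Omega$ into the cycles of $f$ is preserved by $\Aut(\Gamma)$: since each $g\in\Aut(\Gamma)$ commutes with $f$, it maps the $\langle f\rangle$-orbit of $v$ onto the $\langle f\rangle$-orbit of $v^g$. Thus the cycles of $f$ form a block system for the transitive group $\Aut(\Gamma)$; in particular they share a common length $m$, and $m\ge 2$ because $f$ has no fixed point. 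If $f$ had more than one cycle this block system would be nontrivial, with block size $m$ satisfying $1<m<n$, contradicting primitivity. Therefore $f$ consists of a single $n$-cycle and $\Gamma$ is a directed $n$-cycle.

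It remains to determine when a directed $n$-cycle is vertex-primitive and to identify it with $\Delta_{p,0,1}$. The automorphism group of a directed $n$-cycle is exactly the regular cyclic group $\langle f\rangle\cong\ZZ_n$, and a regular cyclic group is primitive precisely when $n$ is prime, since otherwise a proper nontrivial subgroup of $\ZZ_n$ produces a nontrivial block system. Hence $n=p$ is prime. Finally, labelling the vertices consecutively along the cycle by $\ZZ_p$ identifies $\Gamma$ with the Cayley digraph on $\ZZ_p$ with connection set $\{1\}$, which is exactly $\Delta_{p,0,1}$. I expect the main subtlety to lie not in any single computation but in the bookkeeping that turns primitivity into two successive constraints — first that the cycle decomposition cannot split $\Omega$ into several blocks, and then that a single cycle is primitive only when its length is prime. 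The conceptual heart that makes the rest routine is the in-degree count forcing $f$ to be a permutation.
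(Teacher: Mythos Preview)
Your proof is correct and follows the same two-step outline as the paper's (much terser) argument: first use vertex-primitivity to reduce the loop case to $\Gamma=\Omega^*$, then argue that a loop-free valency-$1$ vertex-primitive digraph must be a directed cycle of prime length. Your explicit in-degree count showing $f$ is a permutation, and the observation that the cycle decomposition of $f$ is an $\Aut(\Gamma)$-invariant partition, simply spell out what the paper compresses into the phrase ``$\Gamma$ must be a directed cycle''.
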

\begin{proof}
If $\Gamma\cap\Omega^*\neq\emptyset$, then, by vertex-primitivity, $\Gamma=\Omega^*$. Otherwise, $\Gamma$ must be a directed cycle and, again by vertex-primitivity, must have prime order $p$ and hence $\Gamma\cong \Delta_{p,0,1}$.
\end{proof}

\begin{corollary}\label{usefulCor}
Let $\Gamma$ be a vertex-primitive digraph on $\Omega$. If $\Gamma_1\neq\emptyset$, then one of the following occurs:
\begin{enumerate}
\item $\Gamma=\Omega^*$, 
\item $\Gamma$ is a complete graph, or 
\item $\Gamma\cong\Delta_{p,x,d}$, for some prime $p$ and $d\geq 1$.
\end{enumerate}
\end{corollary}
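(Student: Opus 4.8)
The plan is to apply Theorem~\ref{theorem:main} with $\kappa=1$ (legitimate, since $\Gamma_1\neq\emptyset$ forces $\kappa=1$) and to translate each of its two conclusions into the stated classification. First I would dispose of the degenerate cases: if $\Gamma=\emptyset$ or $\Gamma=\Omega\times\Omega$ then all neighbourhoods coincide, so $\Gamma_1=\emptyset$, a contradiction; hence $\emptyset\neq\Gamma\neq\Omega\times\Omega$ and Lemma~\ref{newLemma} gives $\Gamma_0=\Omega^*$. If the valency $d$ equals $1$, then Lemma~\ref{lemma:d1} immediately yields $\Gamma=\Omega^*$ or $\Gamma\cong\Delta_{p,0,1}$, which are cases (1) and (3). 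So I may assume $d\geq 2$ and invoke Theorem~\ref{theorem:main}.

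In conclusion~\ref{partfirst} we have $(n-1)(d-1)=d(d-1)$ with $d\geq 2$, hence $n=d+1$; thus each vertex $v$ has a unique non-neighbour $\bar v$ and $\Gamma(v)=\Omega\setminus\{\bar v\}$. Since $\Gamma_0\cup\Gamma_1=\Omega\times\Omega$, distinct vertices always have distinct non-neighbours, so $v\mapsto\bar v$ is a permutation of $\Omega$; from $\Gamma(v)^g=\Gamma(v^g)$ for $g\in\Aut(\Gamma)$ one checks that it centralises $\Aut(\Gamma)$. I would then split on whether this permutation is trivial: if $\bar v=v$ for all $v$, then $\Gamma$ is the complete graph, giving case (2); otherwise it is a nontrivial permutation centralising the primitive group $\Aut(\Gamma)$, and by the standard fact that a primitive group admitting such a permutation is regular of prime degree $p$, we get that $\Gamma$ is a Cayley digraph on $\ZZ_p$ with connection set of size $d=p-1$, missing a single element, hence a translate of $\{x+1,\ldots,x+(p-1)\}$. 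This gives $\Gamma\cong\Delta_{p,x,p-1}$, case (3).

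For conclusion~\ref{parttwo} there is an $i\in\{1,\ldots,d-1\}$ for which the vertex-primitive (since $\Aut(\Gamma)\leq\Aut(\Gamma_i)$) loopless graph $\Gamma_i$ has valency $1$ or $2$. Valency $1$ would make $\Gamma_i$ a vertex-primitive perfect matching, whose edges form blocks of size $2$ and so force $n=2$; but a vertex-primitive digraph on two vertices with $\emptyset\neq\Gamma\neq\Omega\times\Omega$ has valency $1$, contradicting $d\geq 2$, so this case cannot occur. Hence $\Gamma_i$ has valency $2$, making it a disjoint union of cycles; primitivity forbids several components and forces a single cycle of prime length $p$. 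Thus $n=p$, $\Aut(\Gamma)\leq\Aut(\Gamma_i)=D_p$, and transitivity of $\Aut(\Gamma)$ gives that it contains the cyclic group of order $p$; so $\Gamma$ is again a Cayley digraph on $\ZZ_p$.

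It remains, in both Cayley cases, to recognise the connection set $S$ as an interval, and I expect this to be the main substantive point (alongside the regularity/prime-degree step in case~\ref{partfirst}). Here I would use that $\Gamma_1\neq\emptyset$: there is $t\neq 0$ with $|S\cap(S+t)|=d-1$. Applying the group automorphism $x\mapsto t^{-1}x$ of $\ZZ_p$, which replaces $\Gamma$ by an isomorphic Cayley digraph, I may assume $t=1$, so that $|S\cap(S+1)|=d-1$. Viewing $S$ as a set of points on the cycle $\ZZ_p$, the quantity $|S\cap(S+1)|$ equals $d$ minus the number of maximal runs of consecutive elements of $S$, so the condition says $S$ consists of a single run, i.e. $S=\{x+1,\ldots,x+d\}$ for some $x$. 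Therefore $\Gamma\cong\Delta_{p,x,d}$ with $d\geq 1$, which is case (3), completing the proof.
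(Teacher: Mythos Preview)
Your argument is correct and follows essentially the same architecture as the paper's: dispose of $d=1$ via Lemma~\ref{lemma:d1}, apply Theorem~\ref{theorem:main} with $\kappa=1$, reduce both conclusions to the prime Cayley situation, and finally show the connection set is a single arithmetic run in $\ZZ_p$.

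The only noteworthy divergence is in the $n=d+1$ case. The paper observes that $(\Omega\times\Omega)\setminus\Gamma$ then has valency~$1$ and simply reapplies Lemma~\ref{lemma:d1} to this complement, immediately yielding either the complete graph or $\Gamma\cong\Delta_{p,1,p-1}$. You instead build the non-neighbour map $v\mapsto\bar v$, check it centralises $\Aut(\Gamma)$, and invoke the fact that a primitive group with nontrivial centraliser in $\Sym(\Omega)$ is cyclic of prime degree. This is valid---and in effect reproves the content of Lemma~\ref{lemma:d1} in situ---but it is a detour compared with the paper's one-line reuse of that lemma. Your counting of maximal runs to identify $S$ as an interval, and your separate treatment of valency~$1$ versus~$2$ for $\Gamma_i$, are cosmetic rephrasings of the paper's corresponding steps.
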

\begin{proof}
Let $n$ be the order of $\Gamma$ and $d$ its valency. Since $\Gamma_1\neq\emptyset$, $d\geq 1$. If $d=1$, then the result follows by Lemma~\ref{lemma:d1}. We thus assume that $d\geq 2$. By Theorem~\ref{theorem:main}, either $n=d+1$, or there exists $i\in\{1,\ldots,d-1\}$ such that $\Gamma_i$ is regular of valency at least $1$ and at most $2$. 

Suppose first that  $n=d+1$. This implies that $(\Omega\times\Omega)\setminus\Gamma$ has valency $1$ and the result follows again by Lemma~\ref{lemma:d1}.

We may thus assume that there exists $i\in\{1,\ldots,d-1\}$ such that $\Gamma_i$ is regular of valency at least $1$ and at most $2$. This implies that $\Gamma_i$ must have order $2$ or  be a vertex-primitive cycle and thus have prime order. It follows that $\Gamma$ also has prime order and is thus a Cayley digraph on $\ZZ_p$ for some prime $p$. Up to isomorphism, we may assume that $(0,1)\in\Gamma_1$. Let $y$ be the unique element of $\Gamma(1)\setminus\Gamma(0)$. Now, for every $s\in\Gamma(0)\setminus\{y-1\}$, we have that $s+1\in\Gamma(1)\setminus\{y\}$ and thus $s+1\in\Gamma(0)$. It follows that $\Gamma(0)$ is of the form $\{x+1,x+2,\ldots,y-1\}$ for some $x$ and the result follows.
\end{proof}

We note that the second author asked for a proof of Corollary~\ref{usefulCor} on the popular MathOverflow website (see~\url{http://mathoverflow.net/q/186682/}). The question generated some interest there but no answer.

We now use Corollary~\ref{usefulCor} to answer Problem 2(a) in~\cite{ArCam}. In fact, we will prove a slightly more general result. First, we need some terminology regarding synchronising groups (see also~\cite{CameronCourse}). Let $G$ be a permutation group and let $f$ be a map, both with domain $\Omega$. The \emph{kernel} of $f$ is the partition of $\Omega$ into the inverse images of points in the image of $f$. The \emph{kernel type} of $f$ is the partition of $|\Omega|$ given by the sizes of the parts of its kernel. We say that $G$ \emph{synchronises} $f$ if the semigroup $\langle G,f\rangle$ contains a constant map, while $G$ is said to be \emph{synchronising} if $G$ synchronises every non-invertible map on $\Omega$.

\begin{theorem}\label{theotheo}
Let $\Omega$ be a set, let $G$ be a primitive permutation group on $\Omega$ and let $f$ be a map on $\Omega$. If $f$ has kernel type $(p,2,1,\ldots,1)$ with $p\geq 2$, then $G$ synchronises $f$.
\end{theorem}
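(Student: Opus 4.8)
The plan is to argue by contradiction: assume that $\langle G,f\rangle$ contains no constant map and extract from this a vertex-primitive digraph to which Corollary~\ref{usefulCor} applies. First I would invoke the standard graph-theoretic criterion for synchronisation (see~\cite{CameronCourse}): if $\langle G,f\rangle$ contains no constant, then there is a $G$-invariant graph $X$ on $\Omega$, neither empty nor complete, every element of $\langle G,f\rangle$ being an endomorphism of $X$; in particular each kernel class of $f$ is an independent set of $X$, and $X$ has a clique $K$ whose size equals both the clique number and the chromatic number of $X$ (a minimal-rank image of $\langle G,f\rangle$). Writing the kernel of $f$ as $A\cup B\cup(\textrm{singletons})$ with $|A|=p$ and $B=\{b_1,b_2\}$, the sets $A$ and $B$ are thus independent in $X$, so $b_1\not\sim b_2$. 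Since $G\leq\Aut(X)$ is primitive, $X$ is vertex-primitive, and so by Lemma~\ref{newLemma} distinct vertices of $X$ never have equal neighbourhoods; the whole strategy is to contradict a near-miss of this fact.

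The heart of the argument is to manufacture, out of this data, a $G$-invariant digraph $\Gamma$ with $\Gamma_1\neq\emptyset$, that is, two vertices whose out-neighbourhoods differ in exactly one point. The size-two kernel class $B$ is the natural source of such a pair: $f$ identifies $b_1$ with $b_2$ but is injective elsewhere, except on $A$. I would first record that $f$ is injective on every minimal clique (section) $K$ --- otherwise, composing a retraction onto $K$ with $f$ would lower the minimal rank --- so $f$ carries sections to sections and no section meets $A$ or $B$ in two points. Using a minimal-rank idempotent together with suitable elements of $G$, the plan is then to produce an orbital digraph of $G$ (equivalently an element of $\langle G,f\rangle$) in which the identification of $b_1$ with $b_2$ survives while the effect of the size-$p$ class $A$ is absorbed, so that the images of $b_1$ and $b_2$ have neighbourhoods differing by a \emph{single} vertex. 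I expect this construction --- isolating the size-two discrepancy from the size-$p$ one, so as to land in exactly the regime $\kappa=1$ rather than merely $\kappa\leq 2$ --- to be the main obstacle.

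Granting such a $\Gamma$, the endgame is short. Corollary~\ref{usefulCor} forces $\Gamma$ to be $\Omega^*$, a complete graph, or isomorphic to $\Delta_{p,x,d}$ for some prime $p$ and some $d\geq 1$. Since $\Gamma$ has valency at least $1$ by construction it is not $\Omega^*$, and the complete case is a numerical degeneracy (it amounts to $n=d+1$, via part~(\ref{partfirst}) of Theorem~\ref{theorem:main}) that can be settled by hand. The substantive case is $\Gamma\cong\Delta_{p,x,d}$: then $\Omega$ has prime order and $G$ is primitive of prime degree, so by Burnside's theorem $G$ is either $2$-transitive or contained in $\mathrm{AGL}(1,p)$. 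The former is impossible, since a $2$-transitive group leaves invariant no graph other than the empty and complete ones, whereas $X$ is a proper non-trivial $G$-invariant graph. In the latter case $G$ acts as affine permutations of $\ZZ_p$, and I would exploit the explicit cyclic structure of $\Delta_{p,x,d}$ together with the kernel type $(p,2,1,\ldots,1)$ of $f$ to write down, by hand, a composite of $f$ with translations that is constant on $\ZZ_p$. This contradicts the assumption that $\langle G,f\rangle$ contains no constant, so $G$ synchronises $f$. The verification that this affine/cyclic case genuinely yields a constant map, alongside the construction of $\Gamma$ in the previous paragraph, is where the real work lies.
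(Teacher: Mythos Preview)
Your overall architecture (contradiction, the $G$-invariant ``non-collapsing'' graph $X$, then Corollary~\ref{usefulCor}) matches the paper's, but the central step is pointed in the wrong direction and left as a gap, and the endgame is more laboured than necessary.

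\textbf{The main gap.} You try to extract the $\Gamma_1$-pair from the size-$2$ kernel class $B=\{b_1,b_2\}$, and you acknowledge that ``isolating the size-two discrepancy from the size-$p$ one'' is the obstacle. In fact it is the \emph{wrong} pair to look at: the two vertices of $B$ can each have up to $p$ neighbours inside the size-$p$ class $A$, all collapsed by $f$, so there is no control on $|X(b_1)\triangle X(b_2)|$. The paper instead looks at pairs inside the size-$p$ class. Writing $A$ for the size-$2$ part and $B$ for the size-$p$ part (so swapping your labels), each $b\in B$ has at most two $X$-neighbours in $A$, while $f$ is injective on $\Omega\setminus B$ and sends $\Gamma(b)\setminus A$ injectively into $\Gamma(f(b))\setminus\{f(A)\}$, a set of size $d-1$. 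By pigeonhole, as soon as $p\geq 3$ two vertices $b_1,b_2\in B$ have the same valency (either $1$ or $2$) in the bipartite graph between $A$ and $B$, and a short count then gives $|X(b_1)\cap X(b_2)|\geq d-1$, i.e.\ $(b_1,b_2)\in X_1$. So no new digraph needs to be ``manufactured'': $X$ itself witnesses $\Gamma_1\neq\emptyset$. (The reference to ``an orbital digraph of $G$ (equivalently an element of $\langle G,f\rangle$)'' conflates two different objects.)

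\textbf{The logic and the endgame.} The paper runs Corollary~\ref{usefulCor} \emph{contrapositively}: first dispose of prime $|\Omega|$ by the known fact that transitive groups of prime degree are synchronising (Neumann), so that the only surviving outcome of the corollary for the non-empty, non-complete, primitive graph $X$ is $X_1=\emptyset$; then the bipartite pigeonhole above forces $p\leq 2$, and the case $p=2$ (kernel type $(2,2,1,\ldots,1)$) is already a theorem of Ara\'ujo--Cameron. Your proposed endgame (apply the corollary forwards, land in $\Delta_{p,x,d}$, then invoke Burnside and handle the affine case by hand) is replaced by a single citation; there is no need to construct a constant map explicitly in the prime-degree case.
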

\begin{proof}
By contradiction, we assume that $G$ does not synchronise $f$. Let $\Gamma$ be the graph on $\Omega$ such that $(v,w)\in\Gamma$ if and only if there is no element of $\langle G,f\rangle$ which maps $v$ and $w$ to the same point. 

By~\cite[Theorem~5(a),(b)]{ArCam}, we have $\Gamma\neq\emptyset$ and $G\leq\Aut(\Gamma)$ and thus $\Gamma$ is vertex-primitive. Since $f$ is not a permutation, $\Gamma$ is not complete.  Transitive groups of prime degree are synchronising (see for example~\cite[Corollary 2.3]{Neumann}) hence we may assume that $|\Omega|$ is not prime. It thus follows  by Corollary~\ref{usefulCor} that $\Gamma_1=\emptyset$. 

Let $d$ be the valency of $\Gamma$ and let $A$ and $B$ be the parts of the kernel of $f$ with sizes $2$ and $p$, respectively. Let $a=f(A)$ and $b=f(B)$, let $K=A\cup B$ and let $Y$ be the subgraph of $\Gamma$ induced by $K$. By definition, $Y$ is bipartite, with parts $A$ of size $2$ and $B$ of size $p$. By \cite[Lemma 10]{ArCam}, every vertex of $Y$ has degree at least one.

Suppose that there exist $b_1,b_2\in B$ having valency one in $Y$. Then $\Gamma(b_1)\setminus A$ and $\Gamma(b_2)\setminus A$ are mapped injectively and hence bijectively into $\Gamma(b)\setminus\{a\}$ hence we have $\Gamma(b_1)\setminus A=\Gamma(b_2)\setminus A$. This implies that $(b_1,b_2)\in\Gamma_1$, a contradiction.

Now suppose that there exist $b_1,b_2\in B$ having valency two in $Y$. Then $\Gamma(b_1)\cap K=A=\Gamma(b_2)\cap K$ and, as before, $\Gamma(b_1)\setminus A$ and $\Gamma(b_2)\setminus A$ are mapped injectively into $\Gamma(b)\setminus\{a\}$. Since $|\Gamma(b_1)\setminus A|=d-2=|\Gamma(b_2)\setminus A|$ while $|\Gamma(b)\setminus\{a\}|=d-1$, it follows that $|(\Gamma(b_1)\setminus A)\cap(\Gamma(b_1)\setminus A)|\geq d-3$ and thus $|\Gamma(b_1)\cap \Gamma(b_2)|\geq d-1$, which is again a contradiction.

Since every vertex of $B$ has valency either one or two in $Y$, we conclude that $|B|\leq 2$ thus $p=2$ and the result follows by~\cite[Theorem~3(a),(b)]{ArCam}.
\end{proof}

\section{The case $\Gamma_0\cup\Gamma_\kappa=\Omega\times\Omega$}\label{Sec:othercase}

We now say a few words about part~\eqref{partfirst} of the conclusion of Theorem~\ref{theorem:main}, that is, when $\Gamma_0\cup\Gamma_\kappa=\Omega\times\Omega$ and 
\begin{equation}\label{eq:1}
(n-1)(d-\kappa)=d(d-1).
\end{equation}
Let $\mathcal{B}:=\{\Gamma(\alpha)\mid \alpha\in \Omega\}$, as in Section~\ref{papapa}. Note that $\mathcal{B}$ is a set of $d$-subsets of $\Omega$. Moreover, any two distinct elements of $\mathcal{B}$ intersect in $d-\kappa$ elements. In particular, $(\Omega,\mathcal{B})$ is a symmetric  $2$-design with parameters $(n,d,d-\kappa)$ and with a point-primitive automorphism group. (For undefined terminology, see for example~\cite[Chapter~$1$]{CameronVanLint}.)

Given a specific value of $\kappa$, one can often push the analysis further and determine all the possibilities for $\Gamma$. Recall that $1\leq\kappa\leq d$. If $\kappa=d$, then, since $d\geq 1$, Eq. \eqref{eq:1} implies that $d=1$ and we may apply Lemma~\ref{lemma:d1}. The case when $\kappa=1$ was dealt with in Corollary~\ref{usefulCor}. From now on, we assume that 
$$2\leq\kappa\leq d-1.$$ 
Observe that now Eq. \eqref{eq:1} yields
\begin{equation}\label{eq:2}
n=d+\kappa+\frac{\kappa(\kappa-1)}{d-\kappa}.
\end{equation}
In particular, $d-\kappa\leq \kappa(\kappa-1)$, that is,
\begin{equation}\label{eq:3}
d\leq \kappa^2.
\end{equation}
A computation using Eq. \eqref{eq:2} also yields that, for fixed $\kappa$, $n$ is a non-decreasing function of $d$. Therefore the maximum for $n$ (as a function of $\kappa$) is achieved when $d=\kappa+1$ and $n\leq \kappa^2+\kappa+1$.

In our opinion, the most interesting situation occurs when $d=\kappa^2$ or (dually) when $d=\kappa+1$. By Eq. \eqref{eq:1}, we have $n=\kappa^2+\kappa+1$ and thus $(\Omega,\mathcal{B})$ is a symmetric  $2$-design with parameters $(\kappa^2+\kappa+1,\kappa+1,1)$, that is, a finite projective plane of order $\kappa$. Note that $\Aut(\Gamma)$ cannot be $2$-transitive and thus this is a non-Desarguesian projective plane. By a remarkable theorem of Kantor \cite[Theorem B (ii)]{Kan} (which depends upon the classification of the finite simple groups), $n$ is prime.

We conclude by showing how, given an explicit value of $\kappa$, one can often pin down the structure of $\Gamma$. We do this using $\kappa:=4$ as an example. By Eq.~\eqref{eq:2}, we have $n=d+4+12/(d-4)$ and hence $d\in \{5,6,7,8,10,16\}$. Moreover, replacing $\Gamma$ by its complement $(\Omega\times\Omega)\setminus \Gamma$ we may assume that $2d\leq n$. Therefore $(d,n)\in \{(5,21),(6,16),(7,15)\}$. The previous paragraph shows that the case $(d,n)=(5,21)$ does not arise because $21$ is not a prime. When $(d,n)=(7,15)$, a careful analysis of the primitive groups of degree $15$ reveals that $\Gamma$ is isomorphic to the Kneser graph with parameters $(6,2)$ with a loop attached at each vertex. Finally, if $(d,n)=(6,16)$, then going through the primitive groups of degree $16$, one finds that $\Gamma$ is isomorphic to either the Clebsch graph with a loop added at each vertex or to the cartesian product of two copies of a complete graph of order $4$.

\end{document}